 \newtheorem{thm}{Theorem}[section]
 \newtheorem{cor}[thm]{Corollary}
 \newtheorem{lem}[thm]{Lemma}
\begin{document}

%
%
%
%
%
%
%
%
%

\title[Which quartic polynomials have a hyperbolic antiderivative?]
 {Which quartic polynomials have a hyperbolic antiderivative?}

\author[Rajesh Pereira]{Rajesh Pereira}

\address{%
Department of Mathematics and Statistics\\
University of Guelph\\
Guelph ON. N1G 2W1\\
CANADA}

\email{pereirar@uoguelph.ca}

\thanks{This work supported by NSERC and the hospitality of the Mittag-Leffler Institute. I would like to thank Profs. Vladimir Kostov and Alan Horwitz for their suggestions which led to this second version of this ArXiv paper.}

\subjclass{Primary 26C10; Secondary 26D05}

\keywords{geometry of polynomials, hyperbolic polynomials, quartics}

\date{June 29, 2018}
\dedicatory{In Memory of Serguei Shimorin}

\begin{abstract}
Every linear, quadratic or cubic polynomial having all real zeros is the derivative of a polynomial having all real zeros.  The statement is false for higher degree polynomials.  In particular, not every fourth degree polynomial with real zeros is the derivative of a polynomial having all real zeros.  We derive a necessary and sufficient condition for a quartic polynomial to be the derivative of a polynomial having all real zeros.  This condition is a single quadratic form inequality involving the zeros of the quartic polynomial.
\end{abstract}

\maketitle
\section{Introduction}

The relationship between the zeros of a polynomial and those of its derivative has been of significant interest to mathematicians for at least three centuries.  Serguei Shimorin has worked in this area \cite{khavinson2011borcea}.  In this paper, we will study polynomials having all of their zeros on the real line; these are sometimes called hyperbolic polynomials.  It is a simple consequence of Rolle's theorem that the derivative of a hyperbolic polynomial is a hyperbolic polynomial.

 The converse to this is false.  A hyperbolic polynomial of degree three or less always has a hyperbolic antiderivative.  However for $n\geq 4$, there are $n$th degree hyperbolic polynomials which have no hyperbolic antiderivatives at all.  The example $p(x)=(x-1)^2(x-4)^2$ was given in \cite{bilodeau1991generating}.

It would be desirable to have a systematic test for this.  Suppose $Q(z)$ is an $(n+1)$th degree monic hyperbolic polynomial with zeros $z_1\ge z_2\ge z_3 \ge ... \ge z_n\ge z_{n+1}$. Then $Q(z)$ is nonnegative on $[z_{2j+1},z_{2j}]$ and nonpositive on $[z_{2j},z_{2j-1}]$ for all $j:1\le j\le \frac{n}{2}$.  Now let  $w_1\ge w_2\ge w_3\ge ...\ge w_n$ be the zeros of $Q'(z)$.  By Rolle's theorem $z_{j+1}\le w_j \le z_j$ and hence $Q(w_{2j})\geq 0$ and $Q(w_{2j-1})\leq 0$ for all $j$.  Conversely if $Q$ is a real monic $(n+1)$th degree polynomial and there exists $w_1\ge w_2\ge w_3\ge ...\ge w_n$ with $Q'(w_j)=0$, $Q(w_{2j})\geq 0$ and $Q(w_{2j-1})\leq 0$ for all $j$, then $Q$ is hyperbolic by the Intermediate Value theorem.

 This is a nice characterization, however it is in terms of $Q(z)$.  If we start with the polynomial $p(z)$ and find an antiderivative, there is no guarantee that we will get the particular $Q(x)$ which has all of its zeros real, we will instead get $P(z)=Q(z)+c$ for some arbitrary real $c$.  This will shift everything by $c$ which gives us the criterion of Souroujon and Stoyanov.

\begin{lem} \cite{SoSt}
Let $\{ w_k\}_{k=1}^{n}$ be real numbers with $w_1\ge w_2\ge...\ge w_{n-1}\ge w_n$.  Let $p(x)=\prod_{k=1}^{n}(x-w_k)$ and let $P(x)$ be any antiderivative of $p(x)$, then there exists $c\in \mathbb{R}$ such that $P(x)-c$ has all zeros real if and only if $\max \{P(w_k): k$ odd $\} \le \min \{P(w_k): k$ even $\}$ in which case we can take any choice of $c$ such that $\max \{P(w_k): k$ odd $\} \le c \le \min \{P(w_k): k$ even $\}$. \end{lem}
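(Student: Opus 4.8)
The plan is to reduce the statement directly to the hyperbolicity criterion developed in the two paragraphs preceding the lemma. First I would record the basic setup: since $P'=p$, the antiderivative $P$ has degree $n+1$, positive leading coefficient $\frac{1}{n+1}$, and its critical points are exactly the zeros $w_1\ge w_2\ge\dots\ge w_n$ of $p$, counted with multiplicity. For an arbitrary real $c$, the shifted polynomial $P-c$ has the same derivative $p$, hence the same ordered list of critical points $w_1\ge\dots\ge w_n$, while its critical values are $(P-c)(w_k)=P(w_k)-c$.

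Next I would apply the characterization established above to $Q=P-c$. Since $P-c$ is a real polynomial of degree $n+1$ with positive leading coefficient whose critical points are $w_1\ge\dots\ge w_n$, the preceding discussion shows that $P-c$ is hyperbolic if and only if $(P-c)(w_{2j-1})\le 0\le (P-c)(w_{2j})$ for every admissible $j$, where the forward implication is the Rolle argument and the reverse is the Intermediate Value theorem argument. Substituting the critical values, this is precisely the pair of conditions $P(w_k)\le c$ for all $k$ odd and $P(w_k)\ge c$ for all $k$ even.

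I would then rewrite this system as the single chain $\max \{P(w_k): k$ odd$\}\le c\le \min \{P(w_k): k$ even$\}$. A real $c$ obeying this chain exists precisely when its left endpoint does not exceed its right endpoint, that is, when $\max \{P(w_k): k$ odd$\}\le \min \{P(w_k): k$ even$\}$; and whenever it does exist, every $c$ in the closed interval between these two numbers makes $P-c$ hyperbolic. This yields both the stated equivalence and the permissible range for $c$ simultaneously.

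The argument is essentially bookkeeping once the preceding criterion is in hand, so the step requiring the most care is the indexing as the parity of $n$ varies: one must check that the conditions $(P-c)(w_{2j-1})\le 0$ and $(P-c)(w_{2j})\ge 0$ together account for all $n$ critical points without double counting the extreme indices, and that the degenerate case of repeated $w_k$ (where the interlacing inequalities collapse to equalities) still falls under the Rolle and Intermediate Value arguments above. These are the only places where a mishandled index range or a coincident critical point could break the reduction.
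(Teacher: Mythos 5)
Your proposal is correct and takes essentially the same route as the paper: the paper establishes the Rolle/Intermediate Value characterization of hyperbolicity via signs of critical values in the paragraphs preceding the lemma, and then obtains the lemma by exactly your observation that replacing $Q$ by $P-c$ shifts every critical value by $c$, turning the sign conditions into the chain $\max\{P(w_k):k \text{ odd}\}\le c\le \min\{P(w_k):k \text{ even}\}$ (the lemma itself is attributed to Souroujon and Stoyanov rather than proved in full detail). Your closing caveats about index parity and coincident critical points concern the same degenerate cases the paper likewise leaves implicit, so there is no substantive divergence.
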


We can restate this Lemma in a more convenient form.

\begin{lem} \label{firstlem} Let $\{ w_k\}_{k=1}^{n}$ be real numbers with $w_1\ge w_2\ge...\ge w_{n-1}\ge w_n$.  Let $p(x)=\prod_{k=1}^{n}(x-w_k)$ and let $P(x)$ be any antiderivative of $p(x)$, then there exists $c\in \mathbb{R}$ such that $P(x)-c$ has all zeros real if and only if $P(w_j)\geq P(w_k)$ whenever $j$ is even and $k$ is odd and $\vert j-k\vert\geq 3$.  \end{lem}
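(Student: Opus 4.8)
The plan is to deduce Lemma~\ref{firstlem} directly from the preceding Souroujon--Stoyanov criterion by showing that the two stated conditions are logically equivalent. First I would rewrite the inequality $\max \{P(w_k): k \text{ odd}\} \le \min \{P(w_k): k \text{ even}\}$ in an equivalent pointwise form: it holds if and only if $P(w_j)\ge P(w_k)$ for \emph{every} even $j$ and \emph{every} odd $k$, with no restriction on $|j-k|$. Indeed, the largest odd-indexed critical value failing to exceed the smallest even-indexed one is precisely the assertion that each odd-indexed value is dominated by each even-indexed value. Since $j$ is even and $k$ is odd, the difference $j-k$ is always odd, so $|j-k|\in\{1,3,5,\dots\}$, and the condition in Lemma~\ref{firstlem} is exactly this pointwise form with the adjacent pairs $|j-k|=1$ omitted.

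Consequently the content of the Lemma reduces to a single claim: the adjacent inequalities $P(w_j)\ge P(w_{j\pm 1})$ for even $j$ hold automatically for every antiderivative $P$, and therefore impose no extra constraint. One implication is then immediate, since the Souroujon--Stoyanov condition plainly contains all the inequalities demanded by Lemma~\ref{firstlem}. For the reverse implication I must supply these omitted adjacent inequalities for free.

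To prove the adjacency claim I would argue from the sign of $p=P'$ between consecutive zeros. Because $w_1\ge w_2\ge\dots\ge w_n$ are listed in decreasing order, no zero of $p$ lies strictly between two consecutive entries $w_{i+1}\le w_i$; hence on the open interval $(w_{i+1},w_i)$ the product $p(x)=\prod_{k}(x-w_k)$ has constant sign equal to $(-1)^{i}$, since there exactly the $i$ factors $(x-w_1),\dots,(x-w_i)$ are negative. Writing $P(w_j)-P(w_{j+1})=\int_{w_{j+1}}^{w_j}p$, for even $j$ the integrand has sign $(-1)^{j}=+1$, giving $P(w_j)\ge P(w_{j+1})$; likewise $P(w_j)-P(w_{j-1})=-\int_{w_j}^{w_{j-1}}p$, and on $(w_j,w_{j-1})$ the integrand has sign $(-1)^{j-1}$, which is negative for even $j$, so again $P(w_j)\ge P(w_{j-1})$. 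In the degenerate case $w_{i+1}=w_i$ the interval collapses and the inequality holds with equality.

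I expect the only real subtlety to be this last point---handling repeated zeros---together with the parity bookkeeping needed to confirm that the excluded pairs are exactly those with $|j-k|=1$ and that the sign count $(-1)^{i}$ has the correct parity at each even index. The integral representation of the differences $P(w_j)-P(w_k)$ is what makes the argument robust to multiplicities, as a collapsed interval simply contributes zero. Once the adjacent inequalities are in hand, combining the two implications yields the equivalence and hence the Lemma.
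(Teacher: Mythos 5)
Your proposal is correct and takes essentially the same route as the paper: restate the Souroujon--Stoyanov max--min condition as the full set of pairwise inequalities $P(w_j)\ge P(w_k)$ ($j$ even, $k$ odd), observe that parity forces $\vert j-k\vert$ odd, and verify that the adjacent cases $\vert j-k\vert=1$ hold automatically from the sign of $p$ between consecutive zeros. If anything, your bookkeeping is the more careful one --- with the decreasing ordering, $p$ has sign $(-1)^i$ on $(w_{i+1},w_i)$, so your signs (and the integral representation, which handles repeated zeros cleanly) are correct, whereas the paper's parenthetical remark states the two signs with the opposite orientation.
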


We note that if $j-k=1$, then $p(x)>0$ on the interval $(w_k, w_j)$ and if $k-j=1$, then $p(x)<0$ on the interval $(w_j, w_k)$; therefore in both cases we automatically get $P(w_j)\geq P(w_k)$ which is why we can drop these as conditions in Lemma \ref{firstlem}.  (Interestingly, while this fact will not play a role in this paper, these inequalities are the only conditions on the ordered set $\{ P(w_j)\}$ for arbitrary hyperbolic polynomials $P$.  See \cite{davis1956problem} for the exact statement, proof and discussion of this fact.)

\section{Quartic Polynomials}

 A simple induction shows that the number of inequalities in Lemma \ref{firstlem} is \newline $\lfloor (\frac{n}{2}-1)^2\rfloor $ when $n \geq 2$.
In particular, we see that for fourth degree polynomials the existence of a hyperbolic antiderivative essentially is equivalent to a single condition.  We state this special case of Lemma \ref{firstlem}.

\begin{cor} Let $\{ w_k\}_{k=1}^{n}$ be real numbers with $w_1\ge w_2\ge w_3\ge w_4$.  Let $p(x)=\prod_{k=1}^{4}(x-w_k)$ and let $P(x)$ be any antiderivative of $p(x)$, then there exists $c\in \mathbb{R}$ such that $P(x)-c$ has all zeros real if and only if $P(w_4)\geq P(w_1)$. \end{cor}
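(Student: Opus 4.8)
The plan is simply to specialize Lemma \ref{firstlem} to the case $n = 4$ and to verify that its list of inequalities collapses to the single stated condition. By that lemma, an antiderivative $P$ of $p$ admits a constant $c$ for which $P(x) - c$ is hyperbolic precisely when $P(w_j) \geq P(w_k)$ holds for every pair of indices with $j$ even, $k$ odd, and $\vert j - k\vert \geq 3$. The whole task is therefore to enumerate these pairs when the indices are restricted to $\{1,2,3,4\}$.

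First I would record the admissible indices: the even indices are $j \in \{2,4\}$ and the odd indices are $k \in \{1,3\}$, giving four candidate pairs $(j,k)$. Tabulating the gap $\vert j-k\vert$ for each, the pairs $(2,1)$, $(2,3)$, and $(4,3)$ all yield $\vert j-k\vert = 1$, while only $(4,1)$ yields $\vert j-k\vert = 3$. Hence the single pair $(4,1)$ is the only one meeting the requirement $\vert j-k\vert \geq 3$, so the sole surviving inequality from Lemma \ref{firstlem} is $P(w_4) \geq P(w_1)$. This is consistent with the count $\lfloor (\frac{n}{2}-1)^2\rfloor = 1$ recorded above for $n=4$.

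Since Lemma \ref{firstlem} states that the existence of $c$ is equivalent to the full (now singleton) list of inequalities, the corollary follows at once. There is essentially no obstacle here beyond the index bookkeeping: the substantive work has already been carried out in establishing Lemma \ref{firstlem}, and in the accompanying remark that the adjacent pairs with $\vert j-k\vert = 1$ are automatically satisfied and may be discarded. The only point I would take care to confirm is that no pair with $j$ even and $k$ odd other than $(4,1)$ can have index gap at least $3$ when the indices lie in $\{1,2,3,4\}$, which is immediate from the enumeration above.
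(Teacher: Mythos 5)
Your proof is correct and matches the paper's approach: the corollary is stated there precisely as the specialization of Lemma \ref{firstlem} to $n=4$, where the only pair with $j$ even, $k$ odd, and $\vert j-k\vert\geq 3$ is $(j,k)=(4,1)$, leaving the single inequality $P(w_4)\geq P(w_1)$. Your enumeration and the consistency check against the count $\lfloor(\frac{n}{2}-1)^2\rfloor=1$ are exactly the intended justification.
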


We note that if $a$ and $b$ are real numbers with $a\neq 0$ then $p(ax+b)$ is a hyperbolic polynomial with a hyperbolic antiderivative if and only if $p(x)$ is.  We may therefore apply the transformation $ax+b$ which maps $w_1$ to $1$ and $w_4$ to $-1$ and consider quartic polynomials having $1$, $-1$, $s$ and $t$ as zeros with $s,t\in [-1,1]$.  In this case, we get a very simple condition in terms of the zeros of $p$.

\begin{thm} Let $s,t\in [-1,1]$ and let $p(x)=(x-1)(x-s)(x-t)(x+1)$.  Then $p(x)$ has a hyperbolic antiderivative if and only if $st\geq -\frac{1}{5}$. \end{thm}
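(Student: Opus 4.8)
The plan is to reduce the claim to a single definite-integral computation via the preceding Corollary. Since $s,t\in[-1,1]$, among the four zeros $\{1,s,t,-1\}$ the largest is $w_1=1$ and the smallest is $w_4=-1$, with $w_2,w_3$ being $s,t$ in decreasing order. The Corollary then tells us that $p$ has a hyperbolic antiderivative if and only if $P(w_4)\ge P(w_1)$, that is, if and only if $P(-1)\ge P(1)$, where $P$ is any fixed antiderivative of $p$. Note that the ordering of $s$ and $t$ is irrelevant here, since the criterion involves only $w_1$ and $w_4$.

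The key observation, which I would isolate first, is that the quantity $P(1)-P(-1)$ does not depend on the choice of antiderivative: by the Fundamental Theorem of Calculus it equals $\int_{-1}^{1}p(x)\,dx$. Hence the criterion $P(-1)\ge P(1)$ is precisely $\int_{-1}^{1}p(x)\,dx\le 0$, and the whole problem collapses to evaluating this one integral and reading off its sign.

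Next I would compute the integral. Writing $p(x)=(x^2-1)\bigl(x^2-(s+t)x+st\bigr)$ and expanding, the odd-degree terms integrate to zero over the symmetric interval $[-1,1]$, so only the contributions of $x^4$, $x^2$, and the constant term survive. A routine evaluation then yields $\int_{-1}^{1}p(x)\,dx=-\frac{4}{15}\bigl(1+5st\bigr)$.

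Finally, the sign condition $\int_{-1}^{1}p(x)\,dx\le 0$ becomes $-\frac{4}{15}(1+5st)\le 0$, i.e. $1+5st\ge 0$, i.e. $st\ge-\frac{1}{5}$, which is the asserted criterion. In this argument there is no real obstacle: the only conceptual point is recognizing that the antiderivative-independent difference $P(w_1)-P(w_4)$ is a definite integral, after which everything reduces to the sign of an explicitly computable symmetric integral. I would simply verify the arithmetic and check the boundary ordering, including the degenerate cases $s=\pm1$ or $t=\pm1$, where some of the $w_k$ coincide but the inequality $P(-1)\ge P(1)$ is unaffected.
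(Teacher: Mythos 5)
Your proposal is correct and follows essentially the same route as the paper: both reduce the claim via the preceding Corollary to the sign of $P(1)-P(-1)$ and then compute that quantity, arriving at the same value $-\frac{4}{15}(1+5st)$. Your use of the Fundamental Theorem of Calculus and parity on $[-1,1]$ is merely a cleaner organization of the arithmetic that the paper carries out by writing $60P(x)$ explicitly.
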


\begin{proof} Since $p(x)=x^4-(s+t)x^3+(st-1)x^2+(s+t)x-st$, we get $60P(x)=12x^5-15(s+t)x^4+20(st-1)x^3+30(s+t)x^2-60stx$ where $P(x)$ is an antiderivative of $p(x)$.   Now $60(P(1)-P(-1))=2(12+20(st-1)-60st)=8(3+5(st-1)-15st)=8(-2-10st)=-16(1+5st)$; which means $p$ has a hyperbolic antiderivative if and only if $st\geq -\frac{1}{5}$.  \end{proof}

We note that the mapping $\frac{w_1 - w_4}{2}x-\frac{w_1+w_4}{2}$ maps the numbers $-1,s,t,1$ (where $s=\frac{2w_2-w_1-w_4}{w_1-w_4}$ and $t=\frac{2w_3-w_1-w_4}{w_1-w_4}$) to $w_1, w_2, w_3, w_4$.  The inequality $st\geq -\frac{1}{5}$ is equivalent to $5(2w_2-w_1-w_4)(2w_3-w_1-w_4)+(w_1-w_4)^2\geq 0$.  After some algebra, we can restate this condition as follows:

\begin{thm} Let $\{w_i\}_{i=1}^4$ be real numbers with $w_1\ge w_2\ge w_3\ge w_4$ and let $p(x)=(x-w_1)(x-w_2)(x-w_3)(x-w_4)$.  Then $p(x)$ has a hyperbolic antiderivative if and only if $w^t Aw\geq 0$ where $w=(w_1,w_2,w_3,w_4)$ and where

\[
A=
\begin{bmatrix}
6 & -5 & -5 &  4 \\
-5 & 0 & 10 & -5 \\
-5 & 10 & 0 & -5\\
 4 & -5 & -5 & 6
\end{bmatrix} \ \ .
\]\end{thm}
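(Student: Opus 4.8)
The plan is to derive this directly from the previous theorem by undoing the affine normalization. Recall from the remark preceding the statement that an affine change of variable $x\mapsto ax+b$ with $a\neq 0$ preserves both hyperbolicity and the existence of a hyperbolic antiderivative, so I may freely pass between the general zeros $w_1\ge w_2\ge w_3\ge w_4$ and the normalized zeros $-1,s,t,1$ related by
\[
s=\frac{2w_2-w_1-w_4}{w_1-w_4},\qquad t=\frac{2w_3-w_1-w_4}{w_1-w_4}.
\]
First I would dispose of the degenerate case $w_1=w_4$: here the ordering forces all four zeros to coincide, $p(x)=(x-w_1)^4$ has the obvious hyperbolic antiderivative $\tfrac15(x-w_1)^5$, and since the all-ones vector lies in the kernel of $A$ (every row sums to zero) one checks that $w^tAw=0$, so the claimed equivalence holds trivially. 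For the remainder I assume $w_1>w_4$, so that $(w_1-w_4)^2>0$ and the normalization above is legitimate.

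Next I would invoke the previous theorem, which asserts that the normalized polynomial has a hyperbolic antiderivative exactly when $st\ge -\tfrac15$. Substituting the expressions above for $s$ and $t$ and multiplying through by the strictly positive quantity $5(w_1-w_4)^2$ — which reverses no inequality — converts $st\ge-\tfrac15$ into the polynomial inequality
\[
5\,(2w_2-w_1-w_4)(2w_3-w_1-w_4)+(w_1-w_4)^2\ge 0,
\]
which is precisely the intermediate form quoted in the remark. It then remains only to identify the left-hand side with the quadratic form $w^tAw$.

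The final step is a direct expansion. Writing $w=(w_1,w_2,w_3,w_4)$ and collecting the monomials $w_iw_j$, the left-hand side becomes
\[
6w_1^2+6w_4^2+8w_1w_4+20w_2w_3-10w_1w_2-10w_1w_3-10w_2w_4-10w_3w_4,
\]
and comparing this with $w^tAw=\sum_{i,j}A_{ij}w_iw_j$ for the given symmetric matrix $A$ shows the two agree term by term: the diagonal entries $6,0,0,6$ supply the squared terms, while each symmetric pair $2A_{ij}$ supplies the matching cross term. I expect the only genuine obstacle to be bookkeeping — one must expand $(2w_2-w_1-w_4)(2w_3-w_1-w_4)$ carefully and remember that the symmetric representation splits each cross-term coefficient evenly across the two off-diagonal entries of $A$. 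With that matching verified, the chain $st\ge-\tfrac15\iff w^tAw\ge0$ is complete.
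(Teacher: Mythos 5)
Your proof is correct and follows essentially the same route as the paper: normalize via the affine map to zeros $-1,s,t,1$, invoke the previous theorem's criterion $st\ge -\tfrac{1}{5}$, clear the positive denominator $(w_1-w_4)^2$, and verify by expansion that the resulting polynomial equals $w^tAw$ (your expansion and the matching against $A$ both check out). Your explicit treatment of the degenerate case $w_1=w_4$, using that the all-ones vector lies in the kernel of $A$, is a small point the paper glosses over but does not change the argument.
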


We can also reformulate this result in terms of the gaps between the zeros.  Let $g_j=w_j -w_{j+1}$ for $j=1,2,3$.  Then $5(2w_2-w_1-w_4)(2w_3-w_1-w_4)+(w_1-w_4)^2=5(-g_1+g_2+g_3)(-g_1-g_2+g_3)+(g_1+g_2+g_3)^2=5(g_3-g_1)^2-5g_2^2+(g_1+g_2+g_3)^2$.  This gives us the following result.

\begin{thm} Let $\{w_i\}_{i=1}^4$ be real numbers with $w_1\ge w_2\ge w_3\ge w_4$ and let $p(x)=(x-w_1)(x-w_2)(x-w_3)(x-w_4)$.  Then $p(x)$ has a hyperbolic antiderivative if and only if $v^t Bv\geq 0$ where $v=(w_1-w_2,w_2-w_3,w_3-w_4)$ is the vector of distances between adjacent zeros of $p$ and where

\[
B=
\begin{bmatrix}
6 & 1 &  -4 \\
1 & -4 & 1 \\
-4 & 1 & 6
\end{bmatrix} \ \ .
\]  \end{thm}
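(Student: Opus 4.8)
The plan is to derive this statement directly from the preceding theorem rather than to repeat the analysis of the antiderivative. That theorem already establishes that $p(x)$ has a hyperbolic antiderivative if and only if the scalar quantity
\[
S := 5(2w_2-w_1-w_4)(2w_3-w_1-w_4)+(w_1-w_4)^2
\]
is nonnegative. Since $B$ acts on the gap vector $v=(g_1,g_2,g_3)$ with $g_j=w_j-w_{j+1}$, the entire task collapses to proving the polynomial identity $v^tBv=S$. Once this identity is in hand the theorem is immediate: the two sides are equal as functions of $(w_1,w_2,w_3,w_4)$, so one is nonnegative exactly when the other is, and the ordering hypothesis $w_1\ge w_2\ge w_3\ge w_4$ enters only to guarantee that the entries of $v$ are genuine (nonnegative) distances.

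First I would pass from the zeros to the gaps by telescoping, exactly as recorded in the paragraph preceding the statement. Writing each factor as a signed sum of consecutive differences yields $w_1-w_4=g_1+g_2+g_3$, together with $2w_2-w_1-w_4=(w_2-w_1)+(w_2-w_4)=-g_1+g_2+g_3$ and $2w_3-w_1-w_4=(w_3-w_1)+(w_3-w_4)=-g_1-g_2+g_3$. Rather than multiply the two middle factors out blindly, I would exploit a difference-of-squares shortcut: setting $a=g_3-g_1$, the factors are $a+g_2$ and $a-g_2$, so their product is $a^2-g_2^2=(g_3-g_1)^2-g_2^2$. Substituting back turns the criterion into
\[
S=5(g_3-g_1)^2-5g_2^2+(g_1+g_2+g_3)^2 .
\]

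Finally I would expand this into monomials and read off the symmetric quadratic form. The coefficients of $g_1^2$ and $g_3^2$ are $5+1=6$, the coefficient of $g_2^2$ is $-5+1=-4$, the coefficients of $g_1g_2$ and $g_2g_3$ are each $2$, and the coefficient of $g_1g_3$ is $-10+2=-8$; halving the cross terms to symmetrize reproduces precisely the entries of $B$, giving $v^tBv=S$. I do not anticipate a genuine obstacle here, since this is a finite identity between fixed polynomials; the only place a sign or a factor of two can slip is in the symmetrization of the off-diagonal entries, so that bookkeeping is the one step I would double-check. As an independent consistency check one could instead verify the matrix identity $A=L^tBL$, where $L$ is the $3\times4$ difference operator defined by $(Lw)_j=w_j-w_{j+1}$; this recovers the previous theorem's matrix $A$, and the fact that $A$ annihilates $(1,1,1,1)$ reflects the translation invariance of the property and explains why the criterion may be expressed purely in terms of the gaps.
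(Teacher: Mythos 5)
Your argument is correct and coincides essentially with the paper's own proof: the paper likewise substitutes $w_1-w_4=g_1+g_2+g_3$, $2w_2-w_1-w_4=-g_1+g_2+g_3$ and $2w_3-w_1-w_4=-g_1-g_2+g_3$ into the criterion $5(2w_2-w_1-w_4)(2w_3-w_1-w_4)+(w_1-w_4)^2\geq 0$ from the preceding theorem, arrives at $5(g_3-g_1)^2-5g_2^2+(g_1+g_2+g_3)^2$ by the same difference-of-squares simplification, and reads off the symmetric matrix $B$. Your supplementary consistency check $A=L^tBL$ (which does hold, and correctly explains via $A(1,1,1,1)^t=0$ why the criterion is expressible in the gaps alone) is a pleasant addition not present in the paper, but the core route is identical.
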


This suggests the problem of finding the characterization of the zero sets of higher degree polynomials which have hyperbolic antiderivatives.  It is clear that the characterization will be a set of homogeneous polynomial inequalities of the form $S_n=\{ w_1\ge w_2\ge...\ge w_n: p_{n,k}(w_1,w_2,...,w_n)\geq 0;$\  $1\le k \le \lfloor (\frac{n}{2}-1)^2\rfloor  \}$.  The degrees of these polynomials are less than or equal to $n-2$.

It is interesting to note that a related problem has been fairly well studied.  A polynomial $p$ is said to be very hyperbolic if it is the $n^{th}$ derivative of a hyperbolic polynomial for all natural numbers $n$.  V.~P.~Kostov has extensively studied the geometry of the sets of very hyperbolic polynomials; see the references \cite{dimitrov2011sharp,kostov2011topics,kostov2005very} for more details.

\bibliographystyle{plain}

\bibliography{shimbib}

\end{document}